\DeclareFontFamily{U}{ntxmia}{\skewchar \font =127}
 \DeclareFontShape{U}{ntxmia}{m}{it}{
                        <-> \ntxmath@scaled ntxmia
                      }{}    
                      \DeclareFontShape{U}{ntxmia}{b}{it}{
                        <-> \ntxmath@scaled ntxbmia
                      }{}
\def\NAT@spacechar{~}
\newcommand{\urlprefix}{}
\crefname{figure}{figure}{figures}
\crefname{claim}{Claim}{Claims}
\crefname{figure}{Figure}{Figures}
\crefname{claim}{claim}{claims}
\Crefname{figure}{Figure}{Figures}
\Crefname{claim}{Claim}{Claims}
\theoremstyle{plain}
\newtheorem{definition}{Definition}
\newtheorem{claim}[definition]{Claim}
\newtheorem{theorem}[definition]{Theorem}
\newtheorem{corollary}[definition]{Corollary}
\newtheorem{lemma}[definition]{Lemma}
\newtheorem{fact}[definition]{Fact}
\newtheorem{conjecture}[definition]{Conjecture}
\numberwithin{equation}{section}
\renewcommand{\binom}[2]{\ensuremath{\mleft(\kern-.1em\genfrac{}{}{0pt}{}{#1}{#2}\kern-.1em\mright)}}    
\newcommand{\inbinom}[2]{\ensuremath{\bigl(\kern-.1em\genfrac{}{}{0pt}{}{#1}{#2}\kern-.1em\bigr)}} 
\DeclareMathOperator\supp{supp}
\newcommand{\cF}{\mathcal{F}}
\newcommand{\PP}{\mathbb{P}}
\newcommand{\EE}{\mathbb{E}}
\newcommand{\Var}{\operatorname{Var}}
\newcommand{\Cov}{\operatorname{Cov}}
\newcommand{\lgr}{\left<g\right>}
\newcommand{\lGr}{\left<G\right>}
\newcommand{\NN}{\mathbb{N}}
\def\moverlay{\mathpalette\mov@rlay}
\def\mov@rlay#1#2{\leavevmode\vtop{%
  \baselineskip\z@skip \lineskiplimit-\maxdimen
  \ialign{\hfil$\m@th#1##$\hfil\cr#2\crcr}}}
\newcommand{\charfusion}[3][\mathord]{
    #1{\ifx#1\mathop\vphantom{#2}\fi
        \mathpalette\mov@rlay{#2\cr#3}
      }
    \ifx#1\mathop\expandafter\displaylimits\fi}
\newcommand{\Hnkm}{H^{(k)}(n,m)}
\newcommand{\Hnkq}{H^{(k)}(n,q)}
\newcommand{\rhalf}{\lceil \frac{r}{2}\rceil}
\newcommand{\COMMENT}[1]{}
\renewcommand{\COMMENT}[1]{\footnote{\textcolor{blue!70!black}{#1}}} 
\newcommand{\COMNEW}[1]{}
\renewcommand{\COMNEW}[1]{\footnote{\textcolor{red!70!black}{#1}}} 
\title{fractional vs. expectation thresholds: random support case}
\author[T.~Fischer]{Thomas Fischer}
\author[Y.~Person]{Yury Person}
\address{Institut f\"ur Mathematik, Technische Universit\"at Ilmenau, 98684 Ilmenau, Germany} 
\email{thomas.fischer \,|\, yury.person@tu-ilmenau.de}
\date{\today}
\begin{document}

\begin{abstract}
A conjecture of Talagrand (2010) states that the so-called expectation and fractional expectation thresholds are always within at most some constant factor from each other. We prove for the unweighted case that this is a.a.s.\ true when the support is a random hypergraph.
\end{abstract}

\maketitle

\section{Introduction}

The study of threshold functions in random discrete structures is one of the central topics in probabilistic combinatorics. Much progress happened in the recent years when the so-called expectation threshold  conjectures of Kahn and Kalai~\cite{KK07} and of Talagrand~\cite{TalagrandOriginal} were solved in~\cite{PP24}, respectively in~\cite{FKNP19}. More precisely,  the work of Frankston, Kahn, Narayanan and Park~\cite{FKNP19} and the subsequent work by Park and Pham~\cite{PP24} established  the location of the threshold (up to at most a logarithmic factor) in relation to the quantities fractional expectation resp. and  expectation threshold. The quantitative relation between these expectation thresholds remains in general  mysterious and Talagrand  conjectured in~\cite{TalagrandOriginal} that these  thresholds are always at most some absolute constant factor apart. The purpose of this note is to investigate a `randomized' version of Talagrand's conjecture. 
 
 First we introduce the necessary definitions and state already known partial results. For a finite nonempty $n$-element set $X$, some natural $k\le n$ and $p\in[0,1]$ we define for $G\subseteq 2^X:=\{S\colon S\subseteq X\}$ its weight $w(G,p)$ and the upset $\lGr$ of $G$ as follows
\begin{alignat*}{100}
    w(G,p):=\sum_{S\in G} p^{|S|}, \quad \lGr:=\{S\subseteq X\colon \exists\, T\in G, T\subseteq S\}.
\end{alignat*}
 The set $G$ and its upset $\lGr$ may in general represent some monotone property which contains some other monotone property in an easy way. There is also its fractional version, where one looks at a function $g\colon 2^X\to[0,1]$ instead and defines its weight $w(g,p)$ and `upset' $\lgr$ through
\begin{alignat}{100}\label{eq:wgp}
    w(g,p):=\sum_{S\in 2^X} g(S)p^{|S|}, \quad \lgr:=\{S\subseteq X\colon \sum_{T\subseteq S} g(T)\ge 1\}.
\end{alignat}
The expectation threshold $q(\cF)$ of some nontrivial monotone property $\cF\subseteq 2^X$ is defined as the largest $p$ such that there exists $G\subseteq 2^X$ with $\cF\subseteq \lGr$ and $w(G,p)\le 1/2$ and the fractional expectation threshold  $q_f(\cF)$  of $\cF$ is the largest $p$ such that there exists $g\colon 2^X\to [0,1]$ with $\cF\subseteq \lgr$ and $w(g,p)\le 1/2$. Talagrand~\cite{TalagrandOriginal} conjectured that there exists some absolute constant $L>0$ such that  $q_f(\cF)\le L \cdot q(\cF)$ always holds (the inequality $q(\cF)\le q_f(\cF)$ holds trivially). One of the equivalent forms of Talagrand's conjecture is the following.  

\begin{conjecture}[Conjecture~6 from~\cite{FP23}]\label{conj:Talagrand}
    There exists some fixed $L>1$ such that for all $n\in\NN$, $g: 2^X \rightarrow [0,1]$ with $g(\emptyset)=0$ and $p\in [0,1]$ the following holds. 
    If $w(g,p) = 1$ then there exists a set $G\subseteq 2^X\setminus\{\emptyset\}$ with $\left<g\right>\subseteq \left<G\right>$ and $w\left(G,\frac{p}{L}\right) \leq 1$.   
\end{conjecture}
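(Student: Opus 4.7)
The plan is to attempt a rounding argument: given $g: 2^X \to [0,1]$ with $g(\emptyset) = 0$ and $w(g,p) = 1$, construct $G$ by putting each nonempty $S \subseteq X$ into $G$ independently with some probability $\pi(S)$ chosen as a function of $g(S)$ and $|S|$. One then has to verify two things: the weight bound $w(G, p/L) \leq 1$ (in expectation, and then by concentration or truncation) and the covering property $\lgr \subseteq \lGr$.

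First I would decompose by size: write $g = \sum_{k \geq 1} g_k$ with $g_k$ supported on $\binom{X}{k}$, and set $w_k := w(g_k, p)$, so that $\sum_k w_k = 1$. At scale $k$, taking $\pi(S) := \min(1, c_k g_k(S))$ for a suitable $c_k$ gives $\Exp[w(G_k, p/L)] \leq c_k w_k L^{-k}$; a choice like $c_k = 2^{-k} L^k$ summed over $k$ bounds the expected total weight by a constant, after which Markov plus a small resampling step converts this into the deterministic bound on $w(G, p/L)$. To verify the covering condition, one would observe that for each $S \in \lgr$ the independent rounding gives
\[
\PP\bigl[\text{no } T \in G \text{ satisfies } T \subseteq S\bigr] \leq \prod_{\emptyset \neq T \subseteq S}\bigl(1 - \pi(T)\bigr),
\]
which one hopes to push down to $o(1)$ per set by boosting the constants $c_k$ and then to take a union bound over $\lgr$.

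The main obstacle is precisely this covering step. The constraint $\sum_{T \subseteq S} g(T) \geq 1$ translates, under the naive rounding, into a product bound of only $1/\nume$, while $\lgr$ may be exponentially large, so a straight union bound is hopeless. This is essentially why \cref{conj:Talagrand} is still open in full generality: the Frankston--Kahn--Narayanan--Park and Park--Pham approaches pay an extra logarithmic factor that nobody currently knows how to remove by rounding alone. Three plausible routes worth pursuing are: (a) restricting attention to the minimal elements of $\lgr$ and exploiting the spread structure they inherit from $w(g,p) = 1$; (b) a multi-scale chaining argument in which sets already covered at scale $k$ discount the work required at scale $k+1$; and (c) weakening the universal quantifier over $g$ by randomising its support, which is the direction actually taken in this paper and which replaces the problematic union bound over $\lgr$ by concentration over the random hypergraph supporting $g$.
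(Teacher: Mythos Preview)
Your proposal is not a proof, and you are right that none exists: \cref{conj:Talagrand} is stated in the paper as an open conjecture, not as a theorem, and the paper makes no attempt to prove it in full generality. So there is no ``paper's own proof'' to compare against; the paper only establishes the special case in \cref{theorem:main}, where $g$ is constant on a random $k$-uniform support.

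Your diagnosis of why the naive independent-rounding approach fails is essentially correct. The weight bound goes through easily in expectation, but the covering step is the genuine obstruction: the condition $\sum_{T\subseteq S} g(T)\ge 1$ only gives a constant lower bound on the probability that some $T\subseteq S$ survives, and $\lgr$ is far too large for a union bound. Boosting the $c_k$ enough to beat a union bound over $\lgr$ destroys the weight estimate, which is exactly the $\log$-factor loss one sees in \cite{FKNP19,PP24}. Your route~(c) is precisely what the paper pursues: rather than rounding $g$, the paper takes the support of $g$ to be $\Hnkm$, builds $G$ deterministically from the random support (matchings of size $\lceil r/2\rceil$ together with dense layers $\binom{X}{j}$ or $\binom{X}{j}\cap\lgr$), and then shows by first- and second-moment computations over the randomness of $\Hnkm$ that the weight is a.a.s.\ small. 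The randomness is in the \emph{instance}, not in the construction of $G$, so no union bound over $\lgr$ is ever needed.
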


Recently, Dubroff, Kahn and Park~\cite{DKP24} and Pham~\cite{Pham24} proved that Conjecture~\ref{conj:Talagrand} is true for any function $g\colon 2^X\to[0,1]$ which is supported on sets of constant size (i.e.\ independent of $n=|X|$). Moreover, from the quantitative result of Pham~\cite{Pham24}, combined with our result~\cite[Theorem~10]{FP23}, it follows that $L$ can be chosen as $O(\log\log|X|)$. Apart from this result, there were several test cases shown before by Talagrand himself ($g$ is supported on 1-element sets), by DeMarco and Kahn ($g$ is constant and is supported on the edge sets of the cliques of some graph of order $k$), by Frankston, Kahn and Park in~\cite{TalagrandTwoSets} ($g$ is supported on $2$-element sets) and by the authors~\cite{FP23,FP25} ($g$ is supported on the edges of  `almost' linear hypergraphs of any uniformity; $g$ is supported on the edge sets of the cliques of some uniform hypergraph). For more background and somewhat technical special cases we refer to~\cite{FP23, TalagrandOriginal}. It was also shown in~\cite{FP23} that it is enough to study functions $g\colon \binom{X}{k}\to[0,1]$ for all $k\le \ln(|X|)$. 

In this note we will consider constant functions $g$ whose support is some \emph{random} $k$-uniform hypergraph itself. Thus, we will be interested in the `average case' form of Conjecture~\ref{conj:Talagrand}.

The first model of a random hypergraph is a so-called binomial random hypergraph $\Hnkq$, which is a hypergraph on an $n$-element  vertex set $X$, where each of the $\binom{n}{k}$ possible edges appears independently with probability $q$. Another model is the random hypergraph  $\Hnkm$ which is a $k$-uniform hypergraph on an $n$-element vertex set $X$ with $m$ edges chosen uniformly at random from all such hypergraphs on $X$. Both models were studied extensively in other contexts since they generalize the two widely studied random graph models $G(n,p)$ and $G(n,m)$ in a straightforward way. 

Our main result is the following. 

\begin{theorem}\label{theorem:main}
    Let $k$, $r\in\NN$ and set  $g:= \frac{1}{r}\cdot\mathds{1}_{\Hnkm}$. Then for  $L\geq 4\cdot e^3$ the following holds a.a.s.\footnote{We are considering probabilities of events which tend to $1$ as $n$ goes to infinity.} 
 
    If  $p\in[0,1]$ is such that  $w(g,p)=1$ then there exists $G\subseteq 2^X\setminus\{\emptyset\}$ such that $\left<g\right>\subseteq \left<G\right>$ and $w\left(G,\frac{p}{L}\right)\leq 1$.
\end{theorem}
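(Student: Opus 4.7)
The plan is to work in the binomial random hypergraph $H^{(k)}(n,q)$ with $q=m/\binom{n}{k}$; standard switching between $H^{(k)}(n,m)$ and $H^{(k)}(n,q)$ lets us transfer a.a.s.\ statements between the two models. The natural candidate cover is
\[
G := \bigl\{e_1\cup\cdots\cup e_r : e_1,\ldots,e_r\in H \text{ distinct}\bigr\}.
\]
The inclusion $\langle g\rangle\subseteq\langle G\rangle$ is immediate, since any $S$ with $\lvert H\cap\binom{S}{k}\rvert\ge r$ contains $r$ distinct edges $e_1,\ldots,e_r\in H$, whose union belongs to $G$ and is a subset of $S$.

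The heart of the argument is a first-moment estimate on $w(G,p/L)$. Grouping unordered $r$-subsets of $\binom{[n]}{k}$ by the size $j$ of their union,
\[
\EE\bigl[w(G,p/L)\bigr] \;\le\; q^r\sum_{j=k}^{rk}\binom{n}{j}F(j,r,k)(p/L)^j,
\]
where $F(j,r,k)$ is the number of unordered $r$-subsets of $\binom{[j]}{k}$ with union $[j]$. I would bound $F(j,r,k)\le\binom{\binom{j}{k}}{r}\le\binom{j}{k}^r/r!$ and estimate the resulting sum by building the union edge-by-edge: conditional on the partial union $U_{i-1}$, the next factor takes the coefficient-extraction form $[z^k](1+z)^{|U_{i-1}|}(1+pz/L)^{n-|U_{i-1}|}$, which I would bound (valid for every $z>0$) by $(1+z)^{rk}(1+pz/L)^{n}/z^k$ and optimise in $z$; the choice $z=L/(np)$ is effective whenever $np$ is at least a constant multiple of $rkL$. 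Combining with $mp^k=r$ and the elementary estimates $\binom{n}{k}\ge(n/k)^k$ and $r^r/r!\le e^r$, the $n$- and $m$-dependencies cancel and the argument yields a bound of the shape
\[
\EE\bigl[w(G,p/L)\bigr] \;\le\; C\cdot\Bigl(\tfrac{e^{k+1}}{L^k}\Bigr)^r,
\]
for an absolute constant $C$ coming from the convergent geometric series in $t=rk-j$. For $L\ge 4e^3$ one has $e^{k+1}/L^k=(4^ke^{2k-1})^{-1}\le(4e)^{-1}$, so this bound is at most $1/2$.

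The main obstacle is twofold. First, the generating-function estimate is effective only when $np$ is at least a constant multiple of $rkL$; in the complementary \emph{dense} regime $m\asymp\binom{n}{k}$ (where $np$ is bounded) the union cover must be supplemented by the cover $G'$ consisting of all $j_0$-subsets of $[n]$, where $j_0$ is the smallest integer with $\binom{j_0}{k}\ge r$. A parallel calculation shows $w(G',p/L)\le\binom{n}{j_0}(p/L)^{j_0}\le 1/2$ for $L\ge 4e^3$, and the two covers combine via $w(G\cup G',p/L)\le w(G,p/L)+w(G',p/L)$ to handle every admissible $m$. Second, Markov's inequality applied to $\EE[w]\le 1/2$ only gives a constant-probability statement while the theorem claims a.a.s.; to upgrade I would carry out a second-moment computation on $w(G\cup G',p/L)$, exploiting that contributions from $r$-tuples of edges with pairwise disjoint vertex sets are independent and dominate the variance, so that $\Var(w)/\EE[w]^2\to 0$ and Chebyshev yields $w\le(1+o(1))\EE[w]\le 1$ a.a.s. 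The conclusion is then transferred back to $H^{(k)}(n,m)$ by the standard switching argument.
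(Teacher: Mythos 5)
Your plan has the right ingredients in embryo---a union-of-$r$-edges cover plus a fallback to $\binom{X}{j}$-type covers in the dense regime---but as written it breaks down across a wide intermediate range of $np$, and the way you draw the regime boundary cannot be repaired without essentially adopting the per-$j$ structure of the actual proof. Your first-moment estimate uses the union bound $\PP(U\in G)\le F(|U|,r,k)\,q^r$; writing $j=|U|$ and unwinding the parameters this is of order $\bigl(j/(np)\bigr)^{kr}$, which exceeds the trivial bound $1$ as soon as $j\gtrsim np$. Thus for every $j$ in the range $\max(j_0,\,\mathrm{const}\cdot np)\le j\le rk$, nonempty whenever $np\lesssim rk$, the union-bound estimate is useless and one must instead cap $\lvert G\cap\binom{X}{j}\rvert$ by $\binom{n}{j}$. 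Your supplementary cover $G'$ does this only for $j=j_0$; moreover the claimed bound $\binom{n}{j_0}(p/L)^{j_0}\le(enp/(j_0L))^{j_0}\le 1/2$ holds only when $np\lesssim j_0\approx kr^{1/k}$, which for $k\ge 2$ is far below your declared boundary $np\asymp rkL$. The paper's cover resolves exactly this: for each $k+1\le j\le kr-\rhalf$ one takes $G_j=\binom{X}{j}$ when $np/j\le e^2$ (deterministic weight $\le(e^3/L)^j$) and $G_j=\binom{X}{j}\cap\lgr$ when $np/j>e^2$, and the sizes above $kr-\rhalf$ are handled by a separate matching cover $G_0$. The cap $j\le kr-\rhalf$ is not cosmetic: it is what makes $\EE\bigl[w(G_j,p/L)\bigr]\le (e^2/L)^j\bigl(ej/(np)\bigr)^{\rhalf}$ carry an extra decaying factor (Claim~\ref{claim:expectedweight_Gj_large_npj}), which is later needed for the a.a.s.\ conclusion.

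Two further gaps. For the a.a.s.\ upgrade, Markov on $\EE[w]\le 1/2$ gives only failure probability $\le 1/2$, and a single global Chebyshev does not close the gap because for $j$ near the top of the range the first moment is a nonvanishing constant. The paper's Claims~\ref{claim:weight_Gj_large_j}--\ref{claim:weight_Gj} split into three cases: when $j\ge\ln\ln n$ or when $np/j\ge\ln n$ Markov already suffices (precisely because of the extra $\bigl(ej/(np)\bigr)^{\rhalf}$ factor just mentioned), and a careful variance bound using the negative-correlation Lemma~\ref{LemmaCovarianceDisjointHyperGeo} is invoked only in the narrow window $j<\ln\ln n$, $e^2<np/j<\ln n$; your sketch does not confront this case split nor the covariance estimates required. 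Finally, the direction of model transfer you propose is the problematic one: passing from $\Hnkq$ to $\Hnkm$ costs a local-CLT factor of order $\sqrt{m}$, and with a failure probability only of order $1/\ln n$ this does not vanish. The paper works directly in $\Hnkm$ and then deduces the $\Hnkq$ corollary by conditioning on the edge count, which is the direction that actually works.
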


A similar statement holds for the binomial random hypergraph $\Hnkq$, which is a simple consequence of Theorem~\ref{theorem:main}.

\begin{corollary}\label{TheoremHknq}
    Let $k$, $r\in\NN$ and set  $g:= \frac{1}{r}\cdot\mathds{1}_{\Hnkq}$. Then for  $L\geq 4\cdot e^3$ the following holds a.a.s.
    
    If  $p\in[0,1]$ is such that  $w(g,p)=1$ then there exists $G\subseteq 2^X\setminus\{\emptyset\}$ such that $\left<g\right>\subseteq \left<G\right>$ and $w\left(G,\frac{p}{L}\right)\leq 1$.
\end{corollary}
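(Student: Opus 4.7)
The plan is to transfer the assertion of Theorem~\ref{theorem:main} to $\Hnkq$ via the standard two-stage sampling of the binomial random hypergraph. Let $\cQ(H)$ denote the event that, with $g:=\frac{1}{r}\mathds{1}_H$, for every $p\in[0,1]$ satisfying $w(g,p)=1$ there exists $G\subseteq 2^X\setminus\{\emptyset\}$ with $\lgr\subseteq\lGr$ and $w(G,p/L)\le 1$. Theorem~\ref{theorem:main} asserts precisely that $\PP[\cQ(\Hnkm)]\to 1$ for every choice $m=m(n)$.

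Next I would invoke the standard fact that the number $M$ of edges of $\Hnkq$ has distribution $\mathrm{Bin}(\binom{n}{k},q)$ and that conditionally on $M=m$ the hypergraph $\Hnkq$ is uniformly distributed over all $k$-uniform hypergraphs on $n$ vertices with exactly $m$ edges; in other words its conditional law is that of $\Hnkm$. The law of total probability then yields
$$\PP[\neg\cQ(\Hnkq)]\;=\;\sum_{m=0}^{\binom{n}{k}}\PP[M=m]\cdot\PP[\neg\cQ(\Hnkm)].$$

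A routine subsequence argument upgrades the pointwise-in-$m$ bound of Theorem~\ref{theorem:main} to a uniform one: for each $n$, let $m^{*}(n)$ maximise $\PP[\neg\cQ(\Hnkm)]$ over $m$, and apply Theorem~\ref{theorem:main} along the sequence $m=m^{*}(n)$ to conclude that $\max_m \PP[\neg\cQ(\Hnkm)]\to 0$. Substituting this uniform bound into the display above yields $\PP[\neg\cQ(\Hnkq)]\to 0$, as required. The only mildly subtle point in this plan is the passage from pointwise-in-$m$ to uniform-in-$m$ convergence; it is handled by the subsequence extraction just described, or is entirely bypassed if the proof of Theorem~\ref{theorem:main} already supplies a failure bound $\varepsilon(n)$ depending only on $n$. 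I do not anticipate any further obstacle, which is consistent with the corollary being advertised as a simple consequence of Theorem~\ref{theorem:main}.
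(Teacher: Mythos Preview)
Your proposal is correct and follows essentially the same route as the paper: condition on the number of edges to reduce $\Hnkq$ to $\Hnkm$, then average out using the law of total probability (the paper phrases this via the tower property). Your explicit subsequence argument for uniformity in $m$ is exactly what underlies the paper's one-line remark ``uniformly over all $m$, since $m$ is allowed to depend on $n$'', and, as you anticipated, the proof of Theorem~\ref{theorem:main} in fact yields the $n$-only bound $1/\ln(n)$, which makes the point moot.
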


\begin{proof}  
    For $g:= \frac{1}{r}\cdot\mathds{1}_{\Hnkq}$, let $p\in[0,1]$ be such that $w(g,p)=1$ (in the case that $w(g,1)<1$ we set $p=1$). Observe that, in this case, $p$ is itself a random variable. Let further 
    $G(g,L)$ be a $G\subseteq 2^X$ that minimizes $w\left(G,\frac{p}{L}\right)$ under the condition that $\lgr\subseteq\lGr$. 

    Let $Y$ be the number of edges in $\Hnkq$. If we condition on $Y=m$ then we get that $\Hnkq$ has the same distribution as  $\Hnkm$. Hence, we get by Theorem~\ref{theorem:main} that (uniformly over all $m$, since $m$ is allowed to depend on $n$)
    \begin{alignat*}{100} 
        \PP\left(\left.w\left(G(g,L),\frac{p}{L}\right)\geq 1 \right| Y=m\right) \underset{n\rightarrow\infty}{\longrightarrow} 0.
    \end{alignat*}
    Therefore we get with the tower property
    \begin{alignat*}{100}
        \PP\left(w\left(G(g,L),\frac{p}{L}\right)\geq 1\right) = \EE\left[\PP\left(\left.w\left(G(g,L),\frac{p}{L}\right)\geq 1\right| Y\right) \right] \underset{n\rightarrow\infty}{\longrightarrow} 0.   
    \end{alignat*}
\end{proof}

\section{Helpful Estimates}
Below we collect several helpful estimates which we are going to use.

\begin{fact}\label{fact:binomial_bounds}
    For nonnegative integers $j\leq k\leq m\leq n$ we have 
    \begin{alignat*}{100}
        \binom{n}{k}\le \left(\frac{en}{k}\right)^k, \quad\quad 
        \frac{\binom{m}{k}}{\binom{n}{k}}  \leq \left(\frac{m}{n}\right)^k, \quad\quad
        \frac{\binom{n-j}{k-j}}{\binom{n}{k}} \leq \left(\frac{k}{n}\right)^j.
    \end{alignat*}
\end{fact}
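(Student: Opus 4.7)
The statement collects three standard binomial estimates; I would prove them in order, reusing computations across the three parts, and would aim for a short clean writeup rather than chasing the sharpest constants.

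For the first inequality $\binom{n}{k}\le (en/k)^k$, my plan is to combine the trivial bound $\binom{n}{k}\le n^k/k!$ with the lower bound $k!\ge (k/e)^k$. The latter is the standard consequence of the Taylor expansion $\nume^k = \sum_{i\ge 0} k^i/i! \ge k^k/k!$, which rearranges to $k!\ge (k/\nume)^k$. Multiplying gives $\binom{n}{k}\le n^k\cdot(\nume/k)^k = (\nume n/k)^k$, as required. (The $k=0$ case is trivial with the convention $0^0=1$.)

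For the second inequality I would write the ratio in product form
\begin{equation*}
\frac{\binom{m}{k}}{\binom{n}{k}} \;=\; \prod_{i=0}^{k-1}\frac{m-i}{n-i},
\end{equation*}
and note that since $m\le n$, each factor satisfies $\frac{m-i}{n-i}\le \frac{m}{n}$ (the inequality $(m-i)n\le (n-i)m$ reduces to $im\le in$, which holds). Taking the product over $i=0,\dots,k-1$ gives the claimed bound $(m/n)^k$.

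The third inequality is handled the same way. After cancelling $(n-k)!$ one obtains
\begin{equation*}
\frac{\binom{n-j}{k-j}}{\binom{n}{k}} \;=\; \frac{k!\,(n-j)!}{(k-j)!\,n!} \;=\; \prod_{i=0}^{j-1}\frac{k-i}{n-i},
\end{equation*}
and since $k\le n$ each factor is at most $k/n$, yielding $(k/n)^j$. No step here is subtle, so the only real obstacle is the bookkeeping of boundary cases ($j=0$, $k=0$, or $k=n$), which I would dispatch by the usual conventions so that the three displayed inequalities hold uniformly over the stated ranges.
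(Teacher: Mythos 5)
Your proof is correct; the paper states this Fact without proof, so there is no in-paper argument to compare against. All three derivations are the standard ones: $\binom{n}{k}\le n^k/k!$ combined with $k!\ge (k/e)^k$ for the first; writing each ratio as a product of $k$ (resp.\ $j$) factors and bounding each factor termwise by $m/n$ (resp.\ $k/n$) for the second and third, using that $\frac{a-i}{b-i}\le \frac{a}{b}$ whenever $0\le i\le a\le b$. The boundary cases you flag are indeed vacuous or trivial under the usual conventions.
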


We write  $a\land b := \min(a,b)$ and $a\lor b := \max(a,b)$ for the sake of smaller formulas. 

\begin{lemma}\label{LemmaHypergeoProb}
    For $K,m\leq N$ and a hypergeometrically $(N,K,m)$-distributed random variable $Y$ the following holds  for $0\le y\le m\land K$:
    \begin{alignat*}{100}
        \mathbb{P}\left(Y\geq y\right) = \sum_{j=y}^{m\land K} \frac{\binom{K}{j}\cdot \binom{N-K}{m-j}}{\binom{N}{m}} \leq \left(\frac{K}{N}\right)^y\cdot \binom{m}{y}.
    \end{alignat*}
\end{lemma}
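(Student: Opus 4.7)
The plan is to prove the bound by a straightforward union bound argument, exploiting the well-known symmetry of the hypergeometric distribution in the parameters $K$ and $m$. Concretely, a $\text{Hyper}(N,K,m)$-distributed $Y$ can be realized equivalently by fixing an $m$-element ``sample'' $S_0\subseteq[N]$ and choosing a uniformly random $K$-element ``special'' set $T\subseteq[N]$, then setting $Y=|S_0\cap T|$. Under this coupling, the event $\{Y\ge y\}$ is exactly the event that there exists some $y$-subset $S\subseteq S_0$ with $S\subseteq T$.

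From here, I would apply the union bound over all $y$-subsets $S$ of $S_0$. Since $|S_0|=m$, there are $\binom{m}{y}$ choices, and for each fixed $S$ of size $y$ the probability $\PP(S\subseteq T)$ equals $\binom{N-y}{K-y}/\binom{N}{K}$. Thus
\begin{align*}
   \PP(Y\ge y) \;\le\; \binom{m}{y}\cdot \frac{\binom{N-y}{K-y}}{\binom{N}{K}}.
\end{align*}

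To finish, I would invoke the third inequality of Fact~\ref{fact:binomial_bounds} with $n=N$, $k=K$, $j=y$, which yields $\binom{N-y}{K-y}/\binom{N}{K}\le (K/N)^y$. Substituting back gives the desired bound $\PP(Y\ge y)\le (K/N)^y\binom{m}{y}$. There is essentially no obstacle here: the only subtle point is to notice that the ``natural'' union bound (over $y$-subsets of the $K$ special elements, giving a $\binom{K}{y}(m/N)^y$ bound) is not the one that matches the claimed right-hand side, so one has to invoke the symmetric realization and sum over subsets of the $m$-sample instead.
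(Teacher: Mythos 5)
Your proposal is correct, and it takes a genuinely different route from the paper's. The paper works purely algebraically: it rewrites the hypergeometric tail using the identities $\binom{K}{j}\binom{j}{y}=\binom{K}{y}\binom{K-y}{j-y}$ and $\binom{N}{m}\binom{m}{y}=\binom{N}{y}\binom{N-y}{m-y}$, factors out $\binom{K}{y}\binom{m}{y}/\binom{N}{y}$, discards the factor $1/\binom{y+j'}{y}\le 1$, and observes that what remains is the total mass of a hypergeometric $(N-y,K-y,m-y)$ distribution, hence equals $1$; then it applies the second inequality of Fact~\ref{fact:binomial_bounds}. You instead give a probabilistic first-moment argument: realize $Y$ as $|S_0\cap T|$ with $|S_0|=m$ fixed and $T$ a uniform $K$-set, observe $\{Y\ge y\}=\bigcup_{S\in\binom{S_0}{y}}\{S\subseteq T\}$, and union bound. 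It is worth noting that the two intermediate bounds coincide, since $\binom{N-y}{K-y}/\binom{N}{K}=\binom{K}{y}/\binom{N}{y}$, so your argument is really a probabilistic re-derivation of the same quantity, but it is more transparent and avoids the reindexing of the sum. Your remark about which realization to use is also accurate: the ``other'' union bound (over $y$-subsets of the $K$-set) would give $\binom{K}{y}(m/N)^y$, which is a valid bound but not the one stated, so choosing the $m$-sample side is genuinely necessary to match the claim.
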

\begin{proof}
    We start with the definition of the hypergeometric$(N,K,m)$-distribution and then rewrite and estimate it appropriately as follows
    \begin{alignat*}{100}
        \mathbb{P}\left(Y\geq y\right) & = \sum_{j=y}^{m\land K} \frac{\binom{K}{j}\cdot \binom{N-K}{m-j}}{\binom{N}{m}}\\
        & = \sum_{j=y}^{m\land K} \frac{\binom{K}{y}\cdot \binom{K-y}{j-y}}{\binom{j}{y}}\cdot \binom{N-K}{m-j}\cdot \frac{\binom{m}{y}}{\binom{N}{y}\cdot \binom{N-y}{m-y}}\\
        & = \frac{\binom{K}{y}\cdot \binom{m}{y}}{\binom{N}{y}}\cdot \sum_{j'=0}^{(m-y)\land (K-y)} \frac{1}{\binom{y+j'}{y}}\cdot \frac{\binom{K-y}{j'}\cdot\binom{N-K}{m-j'-y}}{\binom{N-y}{m-y}}\\
        & \leq \frac{\binom{K}{y}}{\binom{N}{y}}\cdot \binom{m}{y}\cdot \sum_{j'=0}^{(m-y)\land (K-y)} \frac{\binom{K-y}{j'}\cdot\binom{(N-y)-(K-y)}{(m-y)-j'}}{\binom{N-y}{m-y}}\\
        & \overset{\text{Fact~\ref{fact:binomial_bounds}}}{\leq} \left(\frac{K}{N}\right)^y\cdot \binom{m}{y}.
    \end{alignat*}
    Where in the last step we used, additionally to Fact~\ref{fact:binomial_bounds}, that the sum goes over the weights of the hypergeometric $(N-y,K-y,m-y)$-distribution and therefore equals $1$.
\end{proof}

We will also need the following intuitive but somewhat technical  result about non positive correlation of some particular two hypergeometrically distributed and stochastically dependent random variables. 

\begin{lemma}\label{LemmaCovarianceDisjointHyperGeo}
Let $m$, $K$, $N\in \NN$ with $m$, $K\le N$. Let $X$ be an $N$-element set and let $S_1$, $S_2$ be disjoint subsets of $X$ of size $K$ each. Let $M$ be a set of  $m$ elements from $X$ drawn randomly without replacement and let $Y_i:=|S_i\cap M|$ for $i=1$, $2$. 
For any $r\in \NN_0$ the following holds
    \begin{alignat*}{100}
        \Cov\left(\mathds{1}_{\left\{Y_1\geq r\right\}},\mathds{1}_{\left\{Y_2\geq r\right\}}\right) \leq 0.
    \end{alignat*}
\end{lemma}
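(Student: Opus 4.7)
\medskip

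\noindent\textbf{Proof plan.} I would condition on $Y_1$ and reduce the problem to a one-variable monotonicity argument. The set $M$ of size $m$ drawn without replacement from $X$ can be generated in two stages: first pick uniformly which $Y_1$ elements lie in $S_1$, then draw the remaining $m-Y_1$ elements uniformly without replacement from $X\setminus S_1$, a set of size $N-K$ containing the whole $S_2$ (since $S_1\cap S_2=\emptyset$). Consequently, conditional on $Y_1=y_1$, the variable $Y_2$ has the hypergeometric $(N-K,K,m-y_1)$ distribution. Define
\begin{equation*}
    f(y_1) := \PP(Y_2\ge r \mid Y_1 = y_1).
\end{equation*}

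\noindent The key subclaim is that $f$ is non-increasing in $y_1$. This follows from the standard stochastic dominance
\begin{equation*}
    \mathrm{Hyp}(N-K,\,K,\,m-y_1) \ \succeq\ \mathrm{Hyp}(N-K,\,K,\,m-y_1-1),
\end{equation*}
which admits a one-line coupling: realize the first distribution as the second plus one extra uniform draw from the remaining elements, where the count of $S_2$-elements can only increase. Hence $f(y_1+1)\le f(y_1)$.

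\medskip

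\noindent Now the tower property gives
\begin{equation*}
    \Cov\!\left(\mathds{1}_{\{Y_1\ge r\}},\mathds{1}_{\{Y_2\ge r\}}\right)
    \;=\; \Cov\!\left(\mathds{1}_{\{Y_1\ge r\}},\,f(Y_1)\right),
\end{equation*}
since $\EE[\mathds{1}_{\{Y_2\ge r\}}\mid Y_1]=f(Y_1)$. The function $y_1\mapsto \mathds{1}_{\{y_1\ge r\}}$ is non-decreasing while $y_1\mapsto f(y_1)$ is non-increasing, so by the Chebyshev sum inequality (a one-variable FKG), applied to the distribution of $Y_1$, the last covariance is $\le 0$.

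\medskip

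\noindent I do not expect any serious obstacle: the reduction to a single conditioning variable is immediate from the disjointness of $S_1$ and $S_2$, the stochastic dominance is standard, and the final step is a textbook inequality. The only mildly delicate point is the correct conditional description of $Y_2$, which crucially uses $S_1\cap S_2=\emptyset$ so that conditioning on $Y_1$ does not alter the number of $S_2$-elements available outside $S_1$.
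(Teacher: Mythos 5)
Your proposal is correct and follows essentially the same route as the paper: condition on $Y_1$, observe that given $Y_1=t$ the variable $Y_2$ is hypergeometrically $(N-K,K,m-t)$-distributed (this is where disjointness of $S_1,S_2$ enters), and use that $\PP(Y_2\ge r\mid Y_1=t)$ is non-increasing in $t$. The only difference is cosmetic: you close the argument via the tower property plus the Chebyshev sum inequality for countermonotone functions of a single random variable, whereas the paper obtains the same conclusion by an explicit algebraic decomposition of the covariance into $\PP(Y_1\ge r)(1-\PP(Y_1\ge r))\bigl(\PP(Y_2\ge r\mid Y_1\ge r)-\PP(Y_2\ge r\mid Y_1<r)\bigr)$; both are standard ways to formalize the same one-variable correlation inequality.
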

\begin{proof}
    First we can assume that  $0< r\leq K\land m$ since otherwise the events have probability $0$ or $1$, which leads to the covariance being $0$. Observe also that each $Y_i$ is hypergeometrically $(N,K,m)$-distributed. 

    Next we consider drawing $Y_1$ and $Y_2$ as a two step process where we first draw the variable $Y_1$ and then $Y_2$. Let $0\le t\le K\land m$. Given $Y_1=t$, we have that $Y_2$ is hypergeometrically $(N-K,K,m-t)$-distributed. Therefore, the probability of $Y_2\geq r$ given $Y_1=t$ is monotonically decreasing in $t$ as there are fewer elements left to draw into $S_2$. 
    With this observation we compute:
    \begin{alignat*}{100}
        \Cov\left(\mathds{1}_{\left\{Y_1\geq r\right\}},\mathds{1}_{\left\{Y_2\geq r\right\}}\right) & = \mathbb{E}\left[\mathds{1}_{\left\{Y_1\geq r\right\}}\cdot\mathds{1}_{\left\{Y_2\geq r\right\}}\right] - \mathbb{E}\left[\mathds{1}_{\left\{Y_1\geq r\right\}}\right]\cdot \mathbb{E}\left[\mathds{1}_{\left\{Y_2\geq r\right\}}\right]\\
        & = \mathbb{P}\left(Y_1, Y_2\geq r\right) - \mathbb{P}\left(Y_1\geq r\right)\cdot \mathbb{P}\left(Y_2\geq r\right)\\
        & = \mathbb{P}\left(Y_2\geq r\land Y_1\geq r\right) - \mathbb{P}\left(Y_1\geq r\right)\cdot \left(\mathbb{P}\left(Y_2\geq r\land Y_1\geq r\right) + \mathbb{P}\left(Y_2\geq r\land Y_1 < r\right)\right)\\
        & = \mathbb{P}\left(Y_1\geq r\right)\cdot \mathbb{P}\left(Y_2\geq r\mid Y_1\geq r\right) - \mathbb{P}\left(Y_1\geq r\right)\cdot \\
        & \quad \left(\mathbb{P}\left(Y_1\geq r\right)\cdot \mathbb{P}\left(Y_2\geq r\mid Y_1\geq r\right) + \left(1-\mathbb{P}\left(Y_1\geq r\right)\right)\cdot\mathbb{P}\left(Y_2\geq r\mid Y_1 < r\right)\right)\\
        & = \mathbb{P}\left(Y_1\geq r\right)\cdot \left(1- \mathbb{P}\left(Y_1\geq r\right)\right) \cdot \left(\mathbb{P}\left(Y_2\geq r\mid Y_1\geq r\right) - \mathbb{P}\left(Y_2\geq r\mid Y_1 < r\right)\right)\\
        & \leq \mathbb{P}\left(Y_1\geq r\right)\cdot \left(1- \mathbb{P}\left(Y_1\geq r\right)\right) \cdot \left(\mathbb{P}\left(Y_2\geq r\mid Y_1 = r\right) - \mathbb{P}\left(Y_2\geq r\mid Y_1 = r-1\right)\right)\\
        & \leq 0,
    \end{alignat*}
    where in the last step we used that $0\leq \mathbb{P}\left(Y_1\geq r\right)\leq 1$ and $\mathbb{P}\left(Y_2\geq r\mid Y_1 = r\right) \leq \mathbb{P}\left(Y_2\geq r\mid Y_1 = r-1\right)$. 
\end{proof}

\section{Proof of Theorem~\ref{theorem:main}}
Let $L\geq 4\cdot e^3$. Let $g:= \frac{1}{r}\cdot\mathds{1}_{\Hnkm}$ and $p\in[0,1]$ be such that $w(g,p)=1$ (in the case that $w(g,1)<1$ we have $\lgr =\emptyset$ and we are done with $G=\emptyset$). It follows from the definition of $w(g,p)$~\eqref{eq:wgp}, that  (we may assume that $m\geq r> 1$ since for $r\leq 1$ we can just take $G=\supp(g)$)
\begin{alignat}{100}\label{eq:p}
    p=\left(\frac{r}{m}\right)^{1/k}.
\end{alignat}

We will cover $\lgr$ by choosing $G$ as $G := G_0 \cup \bigcup_{j=k+1}^{k\cdot r - \rhalf} G_j$, where $G_j$s are defined in the following way:
\begin{itemize}
    \item $G_0:= \left\{\bigcup_{T\in U} T \ \left| \ U\in\binom{\supp(g)}{\rhalf}, \forall T_1,T_2\in U: T_1\cap T_2=\emptyset \lor T_1=T_2 \right.\right\}$ (so $G_0$ contains the unions of  all the matchings with $\rhalf$ edges; this is inspired by an approach in \cite{TalagrandCliques}),
    \item $G_j  := \binom{X}{j}$ if  $k+1\le  j \leq k\cdot r - \rhalf$ and $\frac{n\cdot p}{j} \le e^2$,
    \item $G_j  := \binom{X}{j}\cap \lgr$ if $k+1\le j \leq k\cdot r - \rhalf$ and $\frac{n\cdot p}{j} > e^2$.
\end{itemize}
\ \\

First we observe that $G$ indeed covers $\lgr$.
\begin{claim}\label{claim:covering}
    \begin{alignat*}{100}
        \lgr \subseteq \lGr
    \end{alignat*}
\end{claim}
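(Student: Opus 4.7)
My plan is to pick an arbitrary $S\in\lgr$ and exhibit some $T'\in G$ with $T'\sub S$. Since $g=\frac{1}{r}\cdot\one_{\Hnkm}$, the condition $S\in\lgr$ says exactly that $S$ contains at least $r$ hyperedges of $\supp(g)$. I would split into two cases based on the size of a largest matching of $\supp(g)$ contained in $S$.

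In the first case, $S$ already contains $\rhalf$ pairwise disjoint hyperedges of $\supp(g)$; their union is then a subset of $S$ of size $k\cdot\rhalf$ lying in $G_0$ by construction, and the claim follows. In the second case, the maximum matching in $S$ has size $t<\rhalf$, and a short parity check (separating $r$ even and odd) gives $t\le \lfloor r/2\rfloor$, equivalently $r-t\ge \rhalf$. Since $S$ contains at least $r$ hyperedges of $\supp(g)$ in total, I would pick the $t$ matching edges together with $r-t$ further hyperedges of $\supp(g)$ contained in $S$. By maximality of the matching, each of these $r-t$ additional edges must intersect the vertex set of the matching, and therefore contributes at most $k-1$ new vertices to the union. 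Letting $T'$ denote the union of all $r$ chosen hyperedges, this gives
\[
|T'|\le tk+(r-t)(k-1)=rk-(r-t)\le rk-\rhalf,
\]
while $|T'|\ge k+1$ since the $r\ge 2$ chosen hyperedges are distinct $k$-sets. Setting $j:=|T'|$ one then has $k+1\le j\le kr-\rhalf$; moreover $T'\sub S$ and $T'$ contains $r$ hyperedges of $\supp(g)$, so $T'\in\binom{X}{j}\cap\lgr$. In either of the two possible definitions of $G_j$ we thus obtain $T'\in G_j\sub G$, completing the covering.

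There is no serious obstacle in this step: the statement is purely deterministic and combinatorial, so neither the randomness of $\Hnkm$ nor any of the estimates collected in the previous section enters the argument. These ingredients will only be needed afterwards, when one has to verify that $w(G,p/L)\le 1$ holds a.a.s., which is where the real work of the theorem lies.
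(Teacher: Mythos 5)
Your argument is correct and takes essentially the same approach as the paper: use a maximum $\supp(g)$-matching inside $S$ to route $S$ either into $\lGr$ via $G_0$ (matching of size $\rhalf$) or via the union of $r$ hyperedges landing in the appropriate $G_j$. You are, if anything, slightly more careful than the paper in explicitly verifying the lower bound $j\ge k+1$ and in noting that the union $T'$ itself lies in $\lgr$, which is what is needed when $G_j=\binom{X}{j}\cap\lgr$.
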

\begin{proof}
    Assume we have an $S\in \left<g\right>$ that is not in $\left<G_0\right>$ (otherwise we are done). Then we know that $S$ contains at least $r$ many $T\in\supp(g)$ of which there are less than $\rhalf$ pairwise disjoint. So we take any $r$ of these sets $T$ and then observe that there are at most $\rhalf-1$ many pairwise disjoint sets.  So we get that the union of these $r$ sets  is at most $(\rhalf-1)\cdot k + (r-\rhalf+1)\cdot (k-1) \le  r\cdot k - \rhalf$. Hence $S$  is already contained  in one of the $G_j$ and therefore  $\lgr \subseteq \lGr$.
\end{proof}

So we are  left with calculating the weight $w\left(G,\frac{p}{L}\right)$. This task will be split into several claims.

\begin{claim}[weight of $G_{0}$]\label{claim:weightG0}
    \begin{alignat*}{100}
        w\left(G_{0},\frac{p}{L}\right) \le \left(\frac{2\cdot e}{L}\right)^{k\cdot \rhalf}
    \end{alignat*}
\end{claim}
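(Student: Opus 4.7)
The plan is to treat this as a purely deterministic calculation, using only the relation $p^k=r/m$ from~\eqref{eq:p} and the combinatorial structure of $G_0$; the random hypergraph $\Hnkm$ will enter only through the fact that it has $m$ edges. Writing $s:=\rhalf$ for brevity, the first observation is that every element of $G_0$ is the disjoint union of exactly $s$ edges from $\supp(g)$ and therefore has cardinality exactly $k\cdot s$. Hence the weight factors as
\begin{alignat*}{100}
    w\left(G_{0},\frac{p}{L}\right) = |G_0|\cdot \left(\frac{p}{L}\right)^{k\cdot s}.
\end{alignat*}

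Next I would bound $|G_0|$ deterministically. Since distinct $s$-matchings in $\supp(g)$ may produce the same union, $|G_0|$ is at most the number of $s$-matchings in $\supp(g)=\Hnkm$, which in turn is at most the number of $s$-element subsets of the edge set, giving $|G_0|\le \binom{m}{s}\le (e\cdot m/s)^s$ by Fact~\ref{fact:binomial_bounds}. Substituting $p^k=r/m$ and using $r\le 2s$ would then yield
\begin{alignat*}{100}
    w\left(G_{0},\frac{p}{L}\right) \le \left(\frac{e\cdot m}{s}\right)^s\cdot \frac{r^s}{m^s\cdot L^{k\cdot s}} = \frac{(e\cdot r/s)^s}{L^{k\cdot s}} \le \frac{(2\cdot e)^s}{L^{k\cdot s}}.
\end{alignat*}
Finally, since $2\cdot e\ge 1$ and $k\ge 1$, one has $(2\cdot e)^s\le (2\cdot e)^{k\cdot s}$, and the claimed bound $w(G_0,p/L)\le \left(\frac{2\cdot e}{L}\right)^{k\cdot s}$ follows.

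There is essentially no obstacle here: the key points are simply (i) that every set in $G_0$ has the same cardinality so the weight factors cleanly, and (ii) that the crude bound $\binom{m}{s}$ on the number of $s$-matchings in an $m$-edge hypergraph already suffices, making the whole argument deterministic in the random hypergraph.
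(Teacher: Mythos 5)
Your proof is correct and matches the paper's argument essentially step for step: bound $|G_0|\le\binom{m}{\rhalf}$, apply $\binom{m}{s}\le(em/s)^s$, substitute $p^k=r/m$, use $r\le 2\rhalf$, and finally promote the exponent from $\rhalf$ to $k\rhalf$ using $2e>1$ and $k\ge 1$. The only (welcome) addition is that you make the last step explicit, which the paper leaves implicit.
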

\begin{proof}
    \begin{alignat*}{100}
        w\left(G_{0},\frac{p}{L}\right) & \le \binom{m}{\rhalf}\cdot \left(\frac{p}{L}\right)^{k\cdot \rhalf} 
        \overset{\eqref{eq:p}}{\le} \left(\frac{e\cdot m}{\rhalf}\right)^{\rhalf}\cdot \frac{r^{\rhalf}}{m^{\rhalf}\cdot L^{k\cdot\rhalf}} \leq \left(\frac{2\cdot e}{L}\right)^{k\cdot \rhalf}.
    \end{alignat*}
\end{proof}

To compute the weight of the $G_{j}$'s we will first separate the cases $\frac{n\cdot p}{j} \leq e^2$ and $\frac{n\cdot p}{j} > e^2$.

\begin{claim}[weight of $G_{j}$ for $\frac{n\cdot p}{j} \leq e^2$]\label{claim:weightGj_small_npj}
    For $k+1\le  j \leq k\cdot r - \rhalf$ and $\frac{n\cdot p}{j} \leq e^2$ we have 
    \begin{alignat*}{100}
        w\left(G_{j},\frac{p}{L}\right)\leq \left(\frac{e^3}{L}\right)^{j}.
    \end{alignat*}   
\end{claim}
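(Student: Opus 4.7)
The plan is to exploit the fact that in this regime $G_j$ is simply the complete $j$-uniform hypergraph on $X$, so that $w(G_j, p/L) = \binom{n}{j} (p/L)^j$. No randomness of $\Hnkm$ enters here at all; the bound is a purely deterministic consequence of the hypothesis $np/j \le e^2$.

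More concretely, I would start from the definition of $G_j$ in this case, which gives exactly
\begin{alignat*}{100}
 w\left(G_j, \tfrac{p}{L}\right) \;=\; \binom{n}{j}\left(\tfrac{p}{L}\right)^{j},
\end{alignat*}
and then apply the first inequality from Fact~\ref{fact:binomial_bounds}, namely $\binom{n}{j} \le (en/j)^j$, to obtain
\begin{alignat*}{100}
 w\left(G_j, \tfrac{p}{L}\right) \;\le\; \left(\tfrac{en}{j}\right)^{j}\left(\tfrac{p}{L}\right)^{j} \;=\; \left(\tfrac{e \cdot np}{j \cdot L}\right)^{j}.
\end{alignat*}
The case hypothesis $np/j \le e^2$ then immediately yields $w(G_j, p/L) \le (e^3/L)^j$, which is exactly the claimed bound.

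There is essentially no obstacle in this step: the whole point of splitting on $np/j \le e^2$ versus $np/j > e^2$ is that the cheap union bound over \emph{all} $j$-subsets works precisely when $np/j$ is bounded, while in the opposite regime one has to restrict $G_j$ to $\binom{X}{j} \cap \lgr$ and pay for the probabilistic estimate that a random $j$-set actually lies in $\lgr$. So the real work — invoking Lemma~\ref{LemmaHypergeoProb} and Lemma~\ref{LemmaCovarianceDisjointHyperGeo} to control the number of random edges contained in a given $j$-set — will be needed only for the complementary claim, and not here.
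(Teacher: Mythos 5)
Your argument is correct and coincides exactly with the paper's proof: in this regime $G_j=\binom{X}{j}$, so $w(G_j,p/L)=\binom{n}{j}(p/L)^j\le (en/j)^j(p/L)^j$, and the hypothesis $np/j\le e^2$ gives $(e^3/L)^j$. Nothing more is needed.
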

\begin{proof}
    For $\frac{n\cdot p}{j} \leq e^2$ we have $G_j = \binom{X}{j}$ and therefore
    \begin{alignat*}{100}
        w\left(G_{j},\frac{p}{L}\right) = \binom{n}{j}\cdot \left(\frac{p}{L}\right)^j \leq \left(\frac{e\cdot n}{j}\right)^j\cdot \left(\frac{p}{L}\right)^j = \left(\frac{e}{L}\cdot \frac{n\cdot p}{j}\right)^j \leq \left(\frac{e^3}{L}\right)^{j}.
    \end{alignat*}
\end{proof}

In the case $\frac{n\cdot p}{j} > e^2$ we will need some first and second moment estimates.

\begin{claim}[expected weight of $G_{j}$ for $\frac{n\cdot p}{j} > e^2$]\label{claim:expectedweight_Gj_large_npj}
    For $k+1 \le j \leq k\cdot r - \rhalf$ and $\frac{n\cdot p}{j} > e^2$ we have 
    \begin{alignat*}{100}
        \EE\left[w\left(G_{j},\frac{p}{L}\right)\right]\le \left(\frac{e^2}{L}\right)^{j}\cdot \left(\frac{e\cdot j}{n\cdot p}\right)^{\rhalf}\le \left(\frac{e^2}{L}\right)^{j}.
    \end{alignat*}   
\end{claim}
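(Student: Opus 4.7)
The plan is to use linearity of expectation to reduce $\EE[w(G_j,p/L)]$ to a single hypergeometric tail probability, then apply Lemma~\ref{LemmaHypergeoProb} and rearrange the outcome using the identity $p^k = r/m$ from \eqref{eq:p}.

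First I would write $\EE[w(G_j,p/L)] = (p/L)^j\cdot\binom{n}{j}\cdot\PP(S\in\lgr)$ for any fixed $j$-set $S\subseteq X$. Because $g=\tfrac{1}{r}\mathds{1}_{\Hnkm}$, the event $S\in\lgr$ is equivalent to $Y\ge r$, where $Y$ counts the edges of $\Hnkm$ contained in $S$; this $Y$ is hypergeometrically distributed with parameters $N=\binom{n}{k}$, $K=\binom{j}{k}$ and $m$. Applying Lemma~\ref{LemmaHypergeoProb} with $y=r$ and using Fact~\ref{fact:binomial_bounds} to estimate the binomials then yields $\PP(Y\ge r)\le (j/n)^{kr}(em/r)^r$. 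Substituting $p^k=r/m$ collapses the $m$-dependent factor to $e^r/p^{kr}$, giving $\PP(Y\ge r)\le e^r(j/(np))^{kr}$. Combining this with $\binom{n}{j}\le (en/j)^j$ and cancelling a factor of $(np/j)^j$ produces the intermediate bound $\EE[w(G_j,p/L)] \le (e/L)^j\, e^r\, (j/(np))^{kr-j}$.

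The final task is to reshape this into the target form using both hypotheses of the claim. The range constraint $j\le kr-\lceil r/2\rceil$ allows me to split the exponent as $kr-j = \lceil r/2\rceil + (kr-j-\lceil r/2\rceil)$, with the second summand non-negative. The assumption $np/j>e^2$ then gives $(j/(np))^{kr-j-\lceil r/2\rceil} \le e^{-2(kr-j-\lceil r/2\rceil)}$, and a brief exponent check using $k\ge 1$ verifies that this factor absorbs the $e^r$ while releasing precisely $e^{2j}$ and $e^{\lceil r/2\rceil}$, which rearranges to $(e^2/L)^j\,(ej/(np))^{\lceil r/2\rceil}$. The second inequality in the claim, namely $(ej/(np))^{\lceil r/2\rceil}\le 1$, is immediate from $np/j>e^2>e$.

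I do not anticipate a genuine obstacle: the whole argument is a first-moment computation followed by bookkeeping of exponents. The only moderately delicate step is the last rebalancing, where one has to exploit \emph{both} the range bound $j\le kr-\lceil r/2\rceil$ \emph{and} the size hypothesis $np/j>e^2$ to convert the natural exponent $kr-j$ in the crude estimate into the claim's exponent $\lceil r/2\rceil$.
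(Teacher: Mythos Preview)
Your proposal is correct and follows essentially the same route as the paper: linearity of expectation, the hypergeometric tail bound of Lemma~\ref{LemmaHypergeoProb}, Fact~\ref{fact:binomial_bounds}, and the substitution $p^k=r/m$. The only cosmetic difference is in the final bookkeeping: the paper first weakens $e^r$ to $e^{kr}$ so as to reach the symmetric form $(e^2/L)^j\,(ej/(np))^{kr-j}$ and then simply lowers the exponent $kr-j$ to $\lceil r/2\rceil$ using $ej/(np)<1$, whereas you keep the tighter $e^r$ and split the exponent, which forces the small extra check with $k\ge 1$; both rearrangements are equivalent.
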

\begin{proof}
    For $\frac{n\cdot p}{j} > e^2$ we can write $w\left(G_{j},\frac{p}{L}\right)=\sum_{S\in\binom{X}{j}}\mathds{1}_{\left\{\left|\Hnkm\cap\binom{S}{k}\right|\geq r\right\}}\cdot \left(\frac{p}{L}\right)^j$ and since we know that $\left|\Hnkm\cap\binom{S}{k}\right|$ is hypergeometricaly $\left(\binom{n}{k},\binom{j}{k},m\right)$-distributed, we get:
    \begin{alignat*}{100}
        \EE\left[w\left(G_{j},\frac{p}{L}\right)\right] & = \sum_{S\in\binom{X}{j}} \PP\left(\left|\Hnkm\cap\binom{S}{k}\right|\geq r\right)\cdot \left(\frac{p}{L}\right)^j\\
        & = \binom{n}{j}\cdot \left( \sum_{\ell=r}^{m\land\binom{j}{k}}\frac{\binom{\binom{j}{k}}{\ell}\cdot\binom{\binom{n}{k}-\binom{j}{k}}{m-\ell}}{\binom{\binom{n}{k}}{m}} \right) \cdot \frac{p^j}{L^j}\\
        & \overset{\text{Lemma~\ref{LemmaHypergeoProb}}}{\leq} \binom{n}{j}\cdot \left(\frac{\binom{j}{k}}{\binom{n}{k}}\right)^r \cdot\binom{m}{r} \cdot \frac{p^j}{L^j}\\
        & \overset{\text{Fact}~\ref{fact:binomial_bounds}}{\le} \left(\frac{e\cdot n}{j}\right)^j\cdot \left(\frac{j}{n}\right)^{k\cdot r} \cdot \left(\frac{e\cdot m}{r}\right)^r\cdot \frac{p^j}{L^j}\\
        & \overset{\eqref{eq:p}}{=}  \left(\frac{e\cdot n}{j}\right)^j\cdot \left(\frac{j}{n}\right)^{k\cdot r} \cdot \left(\frac{e}{p^k}\right)^r\cdot \frac{p^j}{L^j} \\
        & \le \left(\frac{e}{L}\cdot \frac{n\cdot p}{j}\right)^j\cdot \left(\frac{e\cdot j}{p\cdot n}\right)^{k\cdot r}\\
        & = \left(\frac{e^2}{L}\right)^j\cdot \left(\frac{e\cdot j}{n\cdot p}\right)^{k\cdot r-j} \leq \left(\frac{e^2}{L}\right)^{j}\cdot \left(\frac{e\cdot j}{n\cdot p}\right)^{\rhalf}.
    \end{alignat*}
    Where in the last step we used that $\frac{n\cdot p}{j} > e^2$ and $j \leq k\cdot r - \rhalf$.
\end{proof}

One might already see that the expectation $\EE\left[w\left(G_{j},\frac{p}{L}\right)\right]$ goes to $0$ if $j$ or $\frac{n\cdot p}{j}$ goes to infinity (or even just $r$). 
Therefore we will split further considerations into  the following three subcases: $ j\ge \ln(\ln(n))$, $\frac{n\cdot p}{j}\ge \ln(n)$ and when the former two do not hold (i.e.\ $j< \ln(\ln(n))$ and  $\frac{n\cdot p}{j}< \ln(n)$).

\begin{claim}[weight of $G_j$ for $j\geq\ln(\ln(n))$]\label{claim:weight_Gj_large_j}
    For $k+1\le j \leq k\cdot r - \rhalf$ and $j\geq\ln(\ln(n))$ we have 
    \begin{alignat*}{100}
        \PP\left(w\left(G_{j},\frac{p}{L}\right) \geq \left(\frac{2\cdot e^3}{L}\right)^{j}\right)\leq \left(\frac{1}{2}\right)^{j}\cdot \frac{1}{\ln(n)}.
    \end{alignat*}
\end{claim}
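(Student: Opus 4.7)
The plan is to deduce the claim from Markov's inequality applied to the first-moment bound of Claim~\ref{claim:expectedweight_Gj_large_npj}, after separating off the deterministic subcase. The hypothesis $j\geq\ln\ln n$ is precisely what is needed to boost the factor $(2e)^{-j}$ coming from Markov into the desired factor $2^{-j}/\ln n$.

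First, in the subcase $\frac{n\cdot p}{j}\leq e^2$, the set $G_j=\binom{X}{j}$ is by definition deterministic, and Claim~\ref{claim:weightGj_small_npj} already yields
\[
    w\left(G_j,\frac{p}{L}\right)\leq\left(\frac{e^3}{L}\right)^{j}<\left(\frac{2\cdot e^3}{L}\right)^{j},
\]
so the probability in question is $0$. In the complementary subcase $\frac{n\cdot p}{j}>e^2$, Claim~\ref{claim:expectedweight_Gj_large_npj} gives
\[
    \EE\left[w\left(G_j,\frac{p}{L}\right)\right]\leq\left(\frac{e^2}{L}\right)^{j},
\]
and Markov's inequality then delivers
\[
    \PP\left(w\left(G_j,\frac{p}{L}\right)\geq\left(\frac{2\cdot e^3}{L}\right)^{j}\right)\leq\frac{(e^2/L)^j}{(2e^3/L)^j}=\frac{1}{(2e)^j}=\frac{1}{2^j}\cdot\frac{1}{e^j}\leq\frac{1}{2^j}\cdot\frac{1}{\ln n},
\]
where the last inequality uses $j\geq\ln\ln n$.

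There is no real obstacle in this particular claim: once Claim~\ref{claim:expectedweight_Gj_large_npj} is available, the argument reduces to the one-line Markov bound above, and the gap between the bases $e^2$ and $2\cdot e^3$ contributes an $e^{-j}$ slack that is tight against $1/\ln n$ exactly when $j\geq\ln\ln n$. I would therefore expect the genuinely delicate step in the overall proof to be the \emph{complementary} regime, namely $j<\ln\ln n$ together with $\frac{n\cdot p}{j}<\ln n$, since there Markov no longer gives the required decay. That subcase presumably has to be handled via a Chebyshev-style second moment argument, invoking Lemma~\ref{LemmaCovarianceDisjointHyperGeo} to exploit that the indicator events $\{|\supp(g)\cap\binom{S}{k}|\geq r\}$ for disjoint $j$-sets $S$ are non-positively correlated, thereby yielding concentration of $w(G_j,p/L)$ around its mean.
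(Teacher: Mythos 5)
Your argument is exactly the paper's proof: Markov's inequality applied to the first-moment bound from Claim~\ref{claim:expectedweight_Gj_large_npj}, with the hypothesis $j\geq\ln\ln n$ converting the resulting $e^{-j}$ slack into the required $1/\ln n$ factor. Your explicit treatment of the deterministic subcase $\frac{n\cdot p}{j}\leq e^2$ (where $G_j$ is non-random and Claim~\ref{claim:weightGj_small_npj} makes the event empty) is a minor tidying-up that the paper defers to the final combination step, but the substance of the argument is identical.
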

\begin{proof}
    We use Markov's inequality and get:
    \begin{alignat*}{100}
        \PP\left(w\left(G_{j},\frac{p}{L}\right) \geq \left(\frac{2\cdot e^3}{L}\right)^{j}\right) \leq \frac{\EE\left[w\left(G_{j},\frac{p}{L}\right)\right]}{\left(\frac{2\cdot e^3}{L}\right)^{j}} \overset{\text{Claim~\ref{claim:expectedweight_Gj_large_npj}}}{\le} \frac{\left(\frac{e^2}{L}\right)^{j}}{\left(\frac{2\cdot e^3}{L}\right)^{j}} \leq \left(\frac{1}{2}\right)^{j}\cdot e^{-j} \leq \left(\frac{1}{2}\right)^{j}\cdot \frac{1}{\ln(n)}.
    \end{alignat*}
\end{proof}

\begin{claim}[weight of $G_{j}$ for $\frac{n\cdot p}{j}\ge \ln(n)$]\label{claim:weight_Gj_verylarge_npj}
    For $k+1\le  j \leq k\cdot r - \rhalf$ and $\frac{n\cdot p}{j}\geq \ln(n)$ we have 
    \begin{alignat*}{100}
        \mathbb{P}\left(w\left(G_{j},\frac{p}{L}\right) \geq \left(\frac{2\cdot e^3}{L}\right)^{j}\right)\le \left(\frac{1}{2}\right)^{j}\cdot \frac{1}{\ln(n)}.
    \end{alignat*}
\end{claim}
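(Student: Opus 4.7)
The plan is to mirror Claim~\ref{claim:weight_Gj_large_j}: apply Markov's inequality to the expectation bound of Claim~\ref{claim:expectedweight_Gj_large_npj} and exploit the current hypothesis $n\cdot p/j \ge \ln(n)$ in the same role as the hypothesis $j \ge \ln(\ln(n))$ was used there. The essential difference is that here I must retain the \emph{sharper} form
\begin{alignat*}{100}
\EE\left[w\left(G_j, \frac{p}{L}\right)\right] \le \left(\frac{e^2}{L}\right)^j \cdot \left(\frac{e\cdot j}{n\cdot p}\right)^{\rhalf}
\end{alignat*}
supplied by Claim~\ref{claim:expectedweight_Gj_large_npj}, i.e.\ \emph{not} discard the secondary factor, since it is precisely this factor that will absorb the extra $1/\ln(n)$ needed on the right-hand side.

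Concretely, Markov's inequality at the threshold $(2\cdot e^3/L)^j$ produces
\begin{alignat*}{100}
\PP\left(w\left(G_j, \frac{p}{L}\right) \ge \left(\frac{2\cdot e^3}{L}\right)^j\right) \le \frac{1}{(2\cdot e)^j}\cdot\left(\frac{e\cdot j}{n\cdot p}\right)^{\rhalf},
\end{alignat*}
and the hypothesis $n\cdot p/j \ge \ln(n)$ turns the last factor into at most $(e/\ln(n))^{\rhalf}$. Splitting $(2e)^{-j}=2^{-j}\cdot e^{-j}$ and using $j\ge k+1\ge 2$, one factor of $e/\ln(n)$ can be absorbed against $e\cdot e^{-j}\le 1$, leaving the desired bound $(1/2)^j\cdot 1/\ln(n)$ multiplied by a residual $(e/\ln(n))^{\rhalf-1}$ which is at most $1$ for $n$ large, since $r$ (hence $\rhalf$) is a fixed constant.

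I expect no genuine obstacle; the argument is a one-line Markov estimate plus constant-bookkeeping, structurally dual to Claim~\ref{claim:weight_Gj_large_j} (which extracted the $1/\ln(n)$ from $e^{-j}$ via $j\ge\ln\ln n$, whereas here it is extracted from $(ej/(np))^{\rhalf}$ via $np/j\ge\ln n$). The only case meriting a moment's thought is $\rhalf=1$, where no residual gain is available, but then $j\ge 2$ alone gives $e^{-j+1}\le 1$ and the bookkeeping closes without issue.
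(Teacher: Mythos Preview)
Your proposal is correct and matches the paper's proof essentially line for line: both apply Markov's inequality with the sharper expectation bound from Claim~\ref{claim:expectedweight_Gj_large_npj} (retaining the factor $(ej/(np))^{\rhalf}$), then use $np/j\ge\ln n$ to extract the required $1/\ln(n)$. Your bookkeeping is in fact slightly more explicit than the paper's, including the observation that for $\rhalf\ge 2$ one needs $n$ large enough so that $e/\ln n\le 1$, a point the paper leaves implicit.
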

\begin{proof}
    We use Markov's inequality and get:
    \begin{alignat*}{100}
        \mathbb{P}\left(w\left(G_{j},\frac{p}{L}\right) \geq \left(\frac{2\cdot e^3}{L}\right)^{j}\right) \leq \frac{\mathbb{E}\left[w\left(G_{j},\frac{p}{L}\right)\right]}{\left(\frac{2\cdot e^3}{L}\right)^{j}} \overset{\text{Claim~\ref{claim:expectedweight_Gj_large_npj}}}{\leq} \frac{\left(\frac{e^2}{L}\right)^{j}\cdot \left(\frac{e\cdot j}{n\cdot p}\right)^{\rhalf}}{\left(\frac{2\cdot e^3}{L}\right)^{j}} \leq \left(\frac{1}{2}\right)^{j}\cdot \frac{1}{\ln(n)}.
    \end{alignat*}
\end{proof}

Finally we may assume that $j< \ln(\ln(n))$ and  $\frac{n\cdot p}{j}< \ln(n)$ and we aim to estimate the variance of $w\left(G_{j},\frac{p}{L}\right)$ (Claim~\ref{claim:variance_weight_Gj}) and with that the probability of large weight (Claim~\ref{claim:weight_Gj}).
\begin{claim}\label{claim:variance_weight_Gj}
    For $k+1\le j \leq k\cdot r - \frac{r}{2}$, $j\leq \ln(\ln(n))$ and $e^2< \frac{n\cdot p}{j}< \ln(n)$ we have for $n$ large enough
    \begin{alignat*}{100}
        \Var\left(w\left(G_{j},\frac{p}{L}\right)\right) \leq \left(\frac{2\cdot e}{L}\right)^j\cdot \frac{\EE\left[w\left(G_{j},\frac{p}{L}\right)\right]}{\ln(n)}.
    \end{alignat*}
\end{claim}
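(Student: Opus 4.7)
The plan is to expand
\begin{align*}
\Var\left(w\left(G_{j},\frac{p}{L}\right)\right) = \left(\frac{p}{L}\right)^{2j} \sum_{S_1, S_2 \in \binom{X}{j}} \Cov(A_{S_1}, A_{S_2}),
\end{align*}
where $A_S := \mathds{1}_{\{|\Hnkm \cap \binom{S}{k}| \geq r\}}$, and then to partition the ordered pairs $(S_1,S_2)$ according to the overlap $s := |S_1 \cap S_2|$, treating three cases separately.

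When $s < k$, the $k$-subsets $\binom{S_1}{k}$ and $\binom{S_2}{k}$ are disjoint subsets of $\binom{X}{k}$, each of size $\binom{j}{k}$, so Lemma~\ref{LemmaCovarianceDisjointHyperGeo}, applied with universe $\binom{X}{k}$ of size $\binom{n}{k}$ and $m$ hyperedges drawn, yields $\Cov(A_{S_1}, A_{S_2}) \leq 0$; such pairs do not contribute. The diagonal $s = j$ contributes $(p/L)^{2j} \sum_{S}\Var(A_S) \leq (p/L)^j \cdot \EE[w(G_j, p/L)]$, so the target bound reduces to $\ln n \leq (2e/p)^j$; this holds in the stated regime for $n$ large enough, because $np/j < \ln n$ gives $p < j\ln n/n$, hence $(2e/p)^j \geq (2en/(j \ln n))^j$, which dwarfs $\ln n$ since $j \geq k+1 \geq 2$ and $j \leq \ln\ln n$.

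The heart of the proof is the middle range $k \leq s \leq j - 1$. For such pairs I would bound $\Cov(A_{S_1}, A_{S_2}) \leq \EE[A_{S_1} A_{S_2}]$ and exploit that $A_{S_1} = A_{S_2} = 1$ forces $|\Hnkm \cap (\binom{S_1}{k} \cup \binom{S_2}{k})| \geq \max(r,\, 2r - \binom{s}{k})$; since $|\binom{S_1}{k} \cup \binom{S_2}{k}| \leq 2\binom{j}{k}$, Lemma~\ref{LemmaHypergeoProb} then supplies an exponentially small upper bound on the joint probability (using $np/j > e^2$, so $(j/(np))^k < e^{-2k}$). Counting ordered pairs of overlap exactly $s$ by $\binom{n}{j}\binom{j}{s}\binom{n-j}{j-s}$ and estimating via Fact~\ref{fact:binomial_bounds} then produces a per-$s$ contribution that decays geometrically in $s-k$, so the sum over $s$ is dominated by its $s = k$ term.

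The main obstacle is precisely this middle range: one has to balance three competing quantities (the number of pairs with overlap $s$, the rapidly decaying joint probability, and the weight $(p/L)^{2j}$), and must use \emph{both} hypotheses $np/j < \ln n$ and $j \leq \ln\ln n$ to absorb polynomial-in-$\ln n$ factors arising from $\binom{n}{j}(p/L)^j$ and land on the target $(2e/L)^j \EE[w(G_j, p/L)]/\ln n$. Once this bookkeeping is completed, combining the three contributions (vanishing, diagonal, middle) yields the claim.
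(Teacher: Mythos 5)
Your skeleton matches the paper's: the same variance expansion into pairwise covariances, the same use of Lemma~\ref{LemmaCovarianceDisjointHyperGeo} to discard the pairs with $|S_1\cap S_2|<k$, and a correct treatment of the diagonal. But the part you yourself call ``the heart of the proof'' --- the range $k\le s\le j-1$ --- is left as an unexecuted plan, so as written this is not a proof. More importantly, the route you sketch there is unnecessarily hard and obscures why the claim holds. You want a refined bound on $\EE[A_{S_1}A_{S_2}]$ via a forced intersection size $\max(r,\,2r-\binom{s}{k})$ of $\Hnkm$ with $\binom{S_1}{k}\cup\binom{S_2}{k}$, hoping for geometric decay in $s-k$. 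That is not needed, and carrying it out would entangle you in exactly the ``three competing quantities'' you worry about.

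The paper instead uses, for \emph{every} pair with $|S_1\cap S_2|\ge k$ (including the diagonal $S_1=S_2$), only the trivial bound $\Cov(B_1,B_2)\le\EE[B_1]$. This turns the whole sum into
$\binom{n}{j}\cdot\Bigl(\sum_{\ell=k}^{j}\binom{j}{\ell}\binom{n-j}{j-\ell}/\binom{n}{j}\Bigr)\cdot\EE\bigl[w(G_j,\tfrac{p}{L})\bigr]\cdot(p/L)^j$,
where the middle factor is the upper tail of a hypergeometric$(n,j,j)$ variable at level $k$, bounded by $(j/n)^k\binom{j}{k}$ via Lemma~\ref{LemmaHypergeoProb}. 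After $\binom{n}{j}\le(en/j)^j$ and $\binom{j}{k}\le 2^j$, one obtains $(2e/L)^j\,(j/n)^k\,(np/j)^j\,\EE[w(G_j,p/L)]$, and the hypotheses $j\le\ln\ln n$ and $np/j<\ln n$ give $(j/n)^k(np/j)^j\le(\ln\ln n/n)^k(\ln n)^{\ln\ln n}\le 1/\ln n$ for $n$ large. No joint-probability estimate, no case split between $s=j$ and $k\le s<j$, and no balancing of a geometric-in-$s$ sum is required --- the whole point is that there are so few pairs with $s\ge k$ that the crudest covariance bound already suffices. Your proposal misses this simplification, and consequently the crucial computation that would close the claim is absent.

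Two smaller remarks. First, your diagonal reduction $\ln n\le(2e/p)^j$ is correct but is subsumed by the unified estimate above, so isolating it buys nothing. Second, if you do pursue your finer route, note that for $s\ge k$ with $\binom{s}{k}\ge r$ the lower bound $\max(r,2r-\binom{s}{k})$ collapses back to $r$, so in that regime your bound is no better than the paper's trivial one --- a sign that the refinement does not pay off across the whole range.
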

\begin{proof}
    Since $w\left(G_{j},\frac{p}{L}\right)=\sum_{S\in\binom{X}{j}}\mathds{1}_{\left\{\left|\Hnkm\cap\binom{S}{k}\right|\geq r\right\}}\cdot \left(\frac{p}{L}\right)^j$, we will need to estimate the covariances of the form $\Cov\left(B_1,B_2\right)$ for $B_i:=\mathds{1}_{\left\{\left|\Hnkm\cap\binom{S_i}{k}\right|\geq r\right\}}$ where $S_1$, $S_2\in\binom{X}{j}$. Since for $|S_1\cap S_2|<k$, the sets $S_1$ and $S_2$ have no potential edges from $\Hnkm$ in common, the covariance $\Cov\left(B_1,B_2\right)$ will not be positive (as proven in Lemma \ref{LemmaCovarianceDisjointHyperGeo}). For the case $|S_1\cap S_2|\ge k$ we will estimate the covariance of these Bernoulli-distributed random variables $B_1$ and $B_2$ by the expectation of one of these: $\Cov(B_1,B_2)=\EE[B_1B_2]-\EE[B_1]\EE[B_2]\le \EE[B_1]$.

    This leads to the following chain of estimates:
    \begin{alignat*}{100}
        \Var\left(w\left(G_{j},\frac{p}{L}\right)\right) & = \sum_{S_1\in\binom{X}{j}}\sum_{S_2\in\binom{X}{j}}\Cov\left(\mathds{1}_{\left\{\left|\Hnkm\cap\binom{S_1}{k}\right|\geq r\right\}}\cdot \left(\frac{p}{L}\right)^j,\mathds{1}_{\left\{\left|\Hnkm\cap\binom{S_2}{k}\right|\geq r\right\}}\cdot \left(\frac{p}{L}\right)^j\right)\\
        & = \sum_{S_1\in\binom{X}{j}}\sum_{S_2\in\binom{X}{j}}\Cov\left(\mathds{1}_{\left\{\left|\Hnkm\cap\binom{S_1}{k}\right|\geq r\right\}},\mathds{1}_{\left\{\left|\Hnkm\cap\binom{S_2}{k}\right|\geq r\right\}}\right)\cdot \left(\frac{p}{L}\right)^{2j}\\
        & \leq \binom{n}{j}\cdot\sum_{\ell=k}^{j}\binom{j}{\ell}\cdot\binom{n-j}{j-\ell}\cdot \mathbb{E}\left[\mathds{1}_{\left\{\left|\Hnkm\cap\binom{S}{k}\right|\geq r\right\}}\right]\cdot \left(\frac{p}{L}\right)^{2j}=:A,
    \end{alignat*}
    where $S$ is some arbitrary $j$-element subset of $X$. We can further rewrite the right hand side above and estimate as follows (for $n$ large enough):
    \begin{alignat*}{100}
        A & = \binom{n}{j}\cdot\sum_{\ell=k}^{j\land j}\frac{\binom{j}{\ell}\cdot\binom{n-j}{j-\ell}}{\binom{n}{j}}\cdot \mathbb{E}\left[w\left(G_{j},\frac{p}{L}\right)\right]\cdot \left(\frac{p}{L}\right)^{j}\\
        & \overset{\text{Lemma~\ref{LemmaHypergeoProb}}}{\leq} \binom{n}{j}\cdot \left(\frac{j}{n}\right)^k\cdot\binom{j}{k}\cdot \mathbb{E}\left[w\left(G_{j},\frac{p}{L}\right)\right]\cdot \left(\frac{p}{L}\right)^{j}\\
        & \leq \left(\frac{e\cdot n}{j}\right)^j\cdot \left(\frac{j}{n}\right)^k\cdot 2^j\cdot \mathbb{E}\left[w\left(G_{j},\frac{p}{L}\right)\right]\cdot \left(\frac{p}{L}\right)^{j}\\
        & = \frac{e^j\cdot 2^j}{L^j}\cdot \left(\frac{j}{n}\right)^k\cdot \left(\frac{n\cdot p}{j}\right)^j\cdot \mathbb{E}\left[w\left(G_{j},\frac{p}{L}\right)\right]\\
        & \leq \left(\frac{2\cdot e}{L}\right)^j\cdot \left(\frac{\ln(\ln(n))}{n}\right)^k\cdot \left(\ln(n)\right)^{\ln(\ln(n))}\cdot \mathbb{E}\left[w\left(G_{j},\frac{p}{L}\right)\right]\\
        & \le \left(\frac{2\cdot e}{L}\right)^j\cdot \frac{\mathbb{E}\left[w\left(G_{j},\frac{p}{L}\right)\right]}{\ln(n)}.
    \end{alignat*}
\end{proof}

\begin{claim}\label{claim:weight_Gj}
    For $k+1\le j \leq k\cdot r - \rhalf$, $j\leq \ln(\ln(n))$ and $e^2< \frac{n\cdot p}{j}< \ln(n)$ we have for $n$ large enough
    \begin{alignat*}{100}
        \PP\left(w\left(G_{j},\frac{p}{L}\right)\geq \left(\frac{2\cdot e^3}{L}\right)^{j}\right)\leq \left(\frac{1}{2}\right)^j\cdot\frac{1}{\ln(n)}.
    \end{alignat*}
\end{claim}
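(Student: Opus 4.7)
The plan is to apply Chebyshev's inequality directly, using the variance estimate of Claim~\ref{claim:variance_weight_Gj} together with the expectation estimate of Claim~\ref{claim:expectedweight_Gj_large_npj}. Write $W:=w\bigl(G_j,\frac{p}{L}\bigr)$ and $t:=\bigl(\tfrac{2e^{3}}{L}\bigr)^{j}$. The central observation is that the threshold $t$ is strictly larger than $\EE[W]$: indeed, Claim~\ref{claim:expectedweight_Gj_large_npj} gives $\EE[W]\le (e^{2}/L)^{j}=t/(2e)^{j}$, so since $j\ge k+1\ge 2$ we have $\EE[W]\le t/(2e)^{j}\le t/2$, hence $t-\EE[W]\ge t/2$.

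I would then invoke Chebyshev in the one-sided form
\begin{alignat*}{100}
    \PP(W\ge t)\;\le\;\PP(|W-\EE[W]|\ge t-\EE[W])\;\le\;\frac{\Var(W)}{(t-\EE[W])^{2}}\;\le\;\frac{4\Var(W)}{t^{2}}.
\end{alignat*}
Now plug in the two estimates: Claim~\ref{claim:variance_weight_Gj} yields $\Var(W)\le \bigl(\tfrac{2e}{L}\bigr)^{j}\EE[W]/\ln(n)$, and Claim~\ref{claim:expectedweight_Gj_large_npj} gives $\EE[W]\le \bigl(\tfrac{e^{2}}{L}\bigr)^{j}$, so $\Var(W)\le \bigl(\tfrac{2e^{3}}{L^{2}}\bigr)^{j}/\ln(n)$. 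Since $t^{2}=\bigl(\tfrac{4e^{6}}{L^{2}}\bigr)^{j}$, the ratio telescopes nicely:
\begin{alignat*}{100}
    \frac{4\Var(W)}{t^{2}}\;\le\;\frac{4}{\ln(n)}\cdot\left(\frac{2e^{3}/L^{2}}{4e^{6}/L^{2}}\right)^{\!j}\;=\;\frac{4}{\ln(n)}\cdot\left(\frac{1}{2e^{3}}\right)^{\!j}\;=\;\left(\frac{1}{2}\right)^{\!j}\cdot\frac{4}{(e^{3})^{j}\,\ln(n)}.
\end{alignat*}
Finally, using $j\ge 2$ (so $(e^{3})^{j}\ge e^{6}>4$), the factor $4/(e^{3})^{j}$ is at most $1$, which gives the claimed bound $(1/2)^{j}/\ln(n)$.

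There is no real obstacle here: the only mild point to check is that the gap $t-\EE[W]$ is a constant fraction of $t$, which is guaranteed by the factor of $2e$ between the bases $2e^{3}/L$ and $e^{2}/L$ together with $j\ge 2$; the rest is an algebraic simplification. All three hypotheses of the claim ($j\le\ln\ln n$, $e^{2}<np/j<\ln n$, $n$ large) are used only through the invocation of Claim~\ref{claim:variance_weight_Gj}, so no further work is needed in this step.
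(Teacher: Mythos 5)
Your proof is correct and follows the same route as the paper's: one-sided Chebyshev with the variance bound of Claim~\ref{claim:variance_weight_Gj} and the expectation bound of Claim~\ref{claim:expectedweight_Gj_large_npj}. The only cosmetic difference is that you package the deviation as $t-\EE[W]\ge t/2$ whereas the paper writes the threshold explicitly as $\EE[W]+2^{j-1}(e^3/L)^j$; the resulting bound $\Var(W)/(2^{2j-2}(e^3/L)^{2j})$ is the same.
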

\begin{proof}
    Since, by Claim~\ref{claim:expectedweight_Gj_large_npj}, $\EE\left[w\left(G_{j},\frac{p}{L}\right)\right]\le \left(\frac{e^2}{L}\right)^{j}$ holds, we may use Chebyshev's inequality and $j\geq k+1\geq 2$ to get:
    \begin{alignat*}{100}
        \PP\left(w\left(G_{j},\frac{p}{L}\right)\geq \left(\frac{2\cdot e^3}{L}\right)^{j}\right) & \overset{\text{Claim~\ref{claim:expectedweight_Gj_large_npj}}}{\leq} 
        \PP\left(w\left(G_{j},\frac{p}{L}\right)\geq \EE\left[w\left(G_{j},\frac{p}{L}\right)\right] + 2^{j-1}\cdot\left(\frac{e^3}{L}\right)^{j}\right)\\
        & \leq \frac{\Var\left(w\left(G_{j},\frac{p}{L}\right)\right)}{2^{2j-2}\cdot\left(\frac{e^3}{L}\right)^{2j}} \overset{\text{Claim~\ref{claim:variance_weight_Gj}}}{\leq} \frac{\left(\frac{2\cdot e}{L}\right)^j\cdot \frac{\mathbb{E}\left[w\left(G_{j},\frac{p}{L}\right)\right]}{\ln(n)}}{2^j\cdot \left(\frac{e^3}{L}\right)^{2j}}\\
        & \leq  \left(\frac{1}{2}\right)^j\cdot \frac{\mathbb{E}\left[w\left(G_{j},\frac{p}{L}\right)\right]}{\left(\frac{e^3}{L}\right)^j}\cdot \frac{1}{\ln(n)} \overset{\text{Claim~\ref{claim:expectedweight_Gj_large_npj}}}{\leq} \left(\frac{1}{2}\right)^j\cdot\frac{1}{\ln(n)}.
    \end{alignat*}
\end{proof}

Now for the final step we combine Claims~\ref{claim:weightG0},~\ref{claim:weightGj_small_npj},~\ref{claim:weight_Gj_large_j},~\ref{claim:weight_Gj_verylarge_npj} and ~\ref{claim:weight_Gj} to bound the weight of $G$. Recall that 
\begin{alignat*}{100}
    G := G_0 \cup \bigcup_{j=k+1}^{k\cdot r - \rhalf} G_j.
\end{alignat*}
Furthermore, the sum below can be bounded by a geometric series as follows
\begin{alignat*}{100}
    \left(\frac{2\cdot e}{L}\right)^{k\cdot \rhalf}+\sum_{j=k+1}^{k\cdot r - \rhalf}\left(\frac{2\cdot e^3}{L}\right)^{j} \overset{L\geq 4\cdot e^3}{<}
    \left(\frac{2\cdot e}{L}\right)^{k}+ \frac{\left(\frac{2\cdot e^3}{L}\right)^{k+1}}{1-\frac{2\cdot e^3}{L}} \overset{L\geq 4\cdot e^3}{\le}
    \left(\frac{4\cdot e^3}{L}\right)^{k}.
\end{alignat*}
Thus,  we may  use the pigeonhole principle to get
\begin{alignat}{100}\label{eq:pigeonhole_estimate}
    \mathbb{P}\left(w\left(G,\frac{p}{L}\right)\ge \left(\frac{4\cdot e^3}{L}\right)^{k}\right) \le 
    \mathbb{P}\left(w\left(G_{0},\frac{p}{L}\right)\ge \frac{2\cdot e^3}{L}\right) + \sum_{j=k+1}^{k\cdot r - \rhalf} \mathbb{P}\left(w\left(G_{j},\frac{p}{L}\right)\ge \left(\frac{2\cdot e^3}{L}\right)^{j}\right).
\end{alignat}
By Claims~\ref{claim:weightG0},~\ref{claim:weightGj_small_npj},~\ref{claim:weight_Gj_large_j},~\ref{claim:weight_Gj_verylarge_npj} and ~\ref{claim:weight_Gj}, the probabilities are either zero or can be bounded each by $\left(\frac{1}{2}\right)^{j}\cdot \frac{1}{\ln(n)}$. The right hand side  of~\eqref{eq:pigeonhole_estimate} can thus be bounded by  $\frac{1}{\ln(n)}$:
\begin{alignat}{100}\label{eq:total_weight_G}
        \mathbb{P}\left(w\left(G,\frac{p}{L}\right)\ge \left(\frac{4\cdot e^3}{L}\right)^{k}\right) \le \frac{1}{\ln(n)}.
\end{alignat}

Since $\lgr\subseteq\lGr$ (Claim~\ref{claim:covering}) and, by~\eqref{eq:total_weight_G} and the choice of $L\geq 4\cdot e^3$, we a.a.s. \ have that the weight $w(G,\frac{p}{L})< \left(\frac{4\cdot e^3}{L}\right)^k\le 1$, this yields the claim of Theorem~\ref{theorem:main}. \qed

\section{Concluding Remarks}
Below we would like to share some thoughts on further possible generalizations of our Theorem~\ref{theorem:main}. 

We remark that Conjecture~\ref{conj:Talagrand} considers functions g which are weighted while our Theorem~\ref{theorem:main} deals only with the case when g is constant on its support. We believe that we could generalize our result as follows. 
\begin{enumerate}
    \item Let $W:=W(n)$ be any given random variable with values in $[0,1]$. Let $\left(W_T\right)_{T\in\binom{X}{k}}$ be i.i.d.\ copies of $W$. Then Theorem~\ref{theorem:main} (resp.\ Corollary~\ref{TheoremHknq}) still hold if we choose $g(T):= W_T$, $g(T):= W_T\cdot\mathds{1}_{\Hnkm}(T)$ resp.\ $g(T):= W_T\cdot\mathds{1}_{\Hnkq}(T)$. 
    \item  For any given $\hat{g}:\binom{X}{k}\rightarrow [0,1]$ and a random permutation $\pi:\binom{X}{k}\rightarrow\binom{X}{k}$ Theorem \ref{theorem:main} (resp. Corollary \ref{TheoremHknq}) still hold if we choose $g(T):= (\hat{g}\circ \pi)(T)$, $g(T):= (\hat{g}\circ \pi)(T)\cdot\mathds{1}_{\Hnkm}(T)$ resp.\ $g(T):= (\hat{g}\circ\pi)(T)\cdot\mathds{1}_{\Hnkq}(T)$. 
\end{enumerate}

The proofs would follow from~\cite[Theorem 23]{FP23} in conjunction with Claim~\ref{claim:expectedweight_Gj_large_npj}, which shows that  the expected weight is falling exponentially fast in $r$. A similar reduction from weighted to an unweighted case was first considered in~\cite{FKNP19}. We omit the details since these would require a lot of technical calculations without too much benefit, but we firmly believe that the unweighted case is the key to solve Conjecture~\ref{conj:Talagrand}. 

It would also be very interesting to turn the randomized setting of Theorem~\ref{theorem:main} into a \emph{pseudorandom} setting, although the appropriate definition of pseudorandomness is not clear. Still, this could provide an approach towards Conjecture~\ref{conj:Talagrand} by decomposing a given function $g$ as $g=g_1+g_2$ where $g_1$ a pseudorandom part  and $g_2$ is  an easily coverable part.


\begin{thebibliography}{10}
\newcommand{\enquotenew}[1]{`#1'}
\providecommand{\natexlab}[1]{#1}
\providecommand{\url}[1]{\texttt{#1}}
\providecommand{\urlprefix}{URL }
\providecommand{\doi}[1]{\textsc{doi}:
  \href{https://doi.org/#1}{\nolinkurl{#1}}}
\providecommand*{\eprint}[2][]{arXiv:
  \href{https://arxiv.org/abs/#2}{\nolinkurl{#2}}}

\bibitem[{DeMarco and Kahn(2015)}]{TalagrandCliques}
B.~DeMarco and J.~Kahn, {Note on a problem of {M}.\ {Talagrand}}. \emph{Random
  Struct. Algorithms} 47.4 (2015),  663--668, \doi{10.1002/rsa.20559}.

\bibitem[{Dubroff, Kahn and Park(2024)}]{DKP24}
Q.~Dubroff, J.~Kahn and J.~Park, {Note on a conjecture of {T}alagrand:
  expectation thresholds vs. fractional expectation thresholds}  (2024).
  ArXiv:2412.00917.

\bibitem[{Fischer and Person(2025{\natexlab{a}})}]{FP25}
T.~Fischer and Y.~Person, {Further remarks on fractional vs.\ expectation
  thresholds}  (2025{\natexlab{a}}). ArXiv:2505.21782.

\bibitem[{Fischer and Person(2025{\natexlab{b}})}]{FP23}
---{}---{}---, {Some results on fractional vs.\ expectation thresholds}.
  \emph{Electron. J. Comb.} 32.3 (2025{\natexlab{b}}),  research paper p3.39,
  23, \doi{10.37236/12611}.

\bibitem[{Frankston, Kahn, Narayanan and Park(2021)}]{FKNP19}
K.~Frankston, J.~Kahn, B.~Narayanan and J.~Park, {Thresholds versus fractional
  expectation-thresholds}. \emph{Ann. Math.} 194.2 (2021),  475--495.

\bibitem[{Frankston, Kahn and Park(2022)}]{TalagrandTwoSets}
K.~Frankston, J.~Kahn and J.~Park, {On a problem of M.\ Talagrand}.
  \emph{Random Struct. Algorithms} 61.4 (2022),  710--723,
  \doi{https://doi.org/10.1002/rsa.21077}.

\bibitem[{Kahn and Kalai(2007)}]{KK07}
J.~Kahn and G.~Kalai, {Thresholds and expectation thresholds}. \emph{Combin.
  Probab. Comput.} 16.3 (2007),  495--502.

\bibitem[{Park and Pham(2024)}]{PP24}
J.~Park and H.~T. Pham, {A proof of the {Kahn}-{Kalai} conjecture}. \emph{J.
  Am. Math. Soc.} 37.1 (2024),  235--243, \doi{10.1090/jams/1028}.

\bibitem[{Pham(2024)}]{Pham24}
H.~Pham, {A sharp version of {T}alagrand's selector process conjecture and an
  application to rounding fractional covers}  (2024). ArXiv:2412.03540.

\bibitem[{Talagrand(2010)}]{TalagrandOriginal}
M.~Talagrand, {Are many small sets explicitly small?} \emph{Proceedings of the
  42nd annual ACM symposium on theory of computing, STOC '10. Cambridge, MA,
  USA, June 5--8, 2010.},  13--36, New York, NY: Association for Computing
  Machinery (ACM), ISBN 978-1-60558-817-9 (2010),
  \doi{10.1145/1806689.1806693}.

\end{thebibliography}
\end{document}